\def\BibTeX{{\rm B\kern-.05em{\sc i\kern-.025em b}\kern-.08em
    T\kern-.1667em\lower.7ex\hbox{E}\kern-.125emX}}
\newcommand{\R}{\mathbb{R}}
\newtheorem{definition}{Definition}
\newtheorem{assumption}{Assumption}
\newtheorem{lemma}{Lemma}
\newtheorem{theorem}{Theorem}
\newtheorem{corollary}{Corollary}
\newtheorem{remark}{Remark}
\newcommand{\fin}{\triangleleft}
\begin{document}
\title{Contraction Analysis of Continuation Method for Suboptimal Model Predictive Control}
\author{Ryotaro Shima, Yuji Ito, \IEEEmembership{Senior Member, IEEE}, and Tatsuya Miyano 
\thanks{Ryotaro Shima, Yuji Ito, and Tatsuya Miyano are with 
Toyota Central R\&D Labs., Inc., 41-1 Yokomichi, Nagakute, Aichi 480-1192, Japan 
(e-mail: ryotaro.shima@mosk.tytlabs.co.jp; ito-yuji@mosk.tytlabs.co.jp; miyano@mosk.tytlabs.co.jp).}}

\bstctlcite{IEEEexample:BSTcontrol}  

\maketitle
\thispagestyle{empty}
\pagestyle{empty}

\begin{abstract}

This letter analyzes the contraction property of the nonlinear systems controlled by suboptimal model predictive control (MPC) using the continuation method.
We propose a contraction metric that reflects the hierarchical dynamics inherent in the continuation method. 
We derive a pair of matrix inequalities that elucidate the impact of suboptimality on the contraction of the optimally controlled closed-loop system.
A numerical example is presented to verify our contraction analysis. 
Our results are applicable to other MPCs than stabilization, including economic MPC.

\end{abstract}

\begin{IEEEkeywords}
Predictive control for nonlinear systems, Optimization algorithms, Time-varying systems
\end{IEEEkeywords}

\section{INTRODUCTION}

\IEEEPARstart{M}{odel} 
predictive control (MPC) has attracted significant attention in the industry owing to its capability of handling nonlinearity of the plant model and constraints of input and state \cite{mpc, automotive_mpc}.
In nonlinear MPC, limited computational resources cause lack of optimality of input series \cite{liao-mcpherson, diehl_inexact, realtime}.
In this letter, we refer to such MPC as suboptimal MPC \cite{feasibility, suboptimal_empc}.
In the past two decades, the closed-loop stability of suboptimal MPC has been investigated \cite{feasibility, gap, jones, lygeros}.
However, these methods are only applicable to stabilization.

Continuation method \cite{cgmres, continuation} is an effective algorithm for suboptimal MPC.
It compensates the time evolution of the optimal inputs by solving a linear equation system, and reaches optimality: see also Remark \ref{remark:feedforward}.
However, the extent to which the suboptimality affects the closed-loop behavior remains unknown.

Contraction theory is a framework for the stability of dynamical systems \cite{bookbullo, hiro} and provides a matrix inequality that specifies the convergence of difference between trajectories \cite{manchester2017}.
Although it has been applied to MPC \cite{kohler, wang2017}, contraction analysis of suboptimal MPC is still open.

This letter discusses the convergence of the suboptimally controlled trajectory to optimally controlled trajectory by means of contraction theory.
The difficulty is that the contraction analysis becomes intractable by randomly choosing the contraction metric.
To avoid this, we propose a contraction metric that reflects a certain hierarchical dynamics inherent in the continuation method.
Using the proposed metric, we present a pair of matrix inequalities that express the impact of the suboptimality on the contraction of the optimally controlled dynamics.
We also verify our analysis numerically.
Our results are applicable to other MPCs than stabilization, including economic MPC.

The remainder of this letter is organized as follows:
Section II presents preliminaries of the contraction theory and the continuation method.
Section III outlines our main results, including our proposed contraction metric and the matrix inequalities.
A numerical verification of our analysis is presented in Section IV.

\subsection{Related Work}

Studies \cite{feasibility, diehl_inexact, liao-mcpherson, jones, lygeros} shows a general stability of the closed-loop system.
However, its problem setting is limited to the stabilization to the origin.

Another study \cite{dorfler} investigates the stability of hierarchical dynamics.
However, it does not include any mention of MPC, nor does it include cases in which the plant or optimal control problem is time-varying, which is often considered in MPC.

Theorem 3 in \cite{tvo_contraction} discusses contraction of optimization dynamics that has compensation of time derivative of optimal solution,
though it is limited to optimization and does not discuss the case of connection to dynamical plant.

To the best of our knowledge, no other literature has discussed the contraction analysis of the continuation method.

\subsection{Notation}

$\langle S \rangle \coloneqq S + S^\top$ for a square matrix $S$.
We denote the interval between $a$ and $b$ on $\R$ by $[a, b]$, the $i$-th element of vector (or vector-valued function) $v$ by $v_i$, and an identity matrix of dimension $n$ by $I_n$.
Given $n$ scalars $a^1, \ldots, a^n$, we denote the vector whose $i$-th component is $a^i$ by $(a^1, \ldots, a^n) \in \R^n$.
Given $k$ vectors $v^1, \ldots, v^k$, we denote the concatenated vector $[(v^1)^\top \ \cdots \ (v^k)^\top]^\top$ by $[v^1; \ldots; v^k]$.
A symmetric matrix-valued function $M : \R^m \to \R^{n \times n}$ is said to be uniformly positive definite if there exists $\epsilon>0$ such that $M(x) \succeq \epsilon I_n$ holds for any $x \in \R^m$.
For a scalar-valued function $f(x)$, where $x\in\R^n$, $\nabla f(x) \in \R^n$ denotes the gradient of $f$ at $x$.
For a vector-valued function $f(x) \in \R^m$, where $x\in\R^n$, $\frac{\partial f}{\partial x}$ denotes $\left[ \frac{\partial f}{\partial x_1} \  \cdots \ \frac{\partial f}{\partial x_n} \right] \in \R^{m\times n}$.
All functions are assumed to be differentiable an arbitrary number of times.

\section{PRELIMINARIES}

\subsection{Brief Introduction of Contraction Analysis \cite{hiro}}

Consider the following time-varying dynamics:
\begin{align}
  \dot{s}(t) = \phi(s(t),t) ,
  \label{dynamics}
\end{align}
where $s \in \R^n$ is the $n$-dimensional state and $\phi : \R^n \times \R \to \R^n$ is a differentiable function.
For notational simplicity we introduce the following operator:
\begin{align*}
    &\mathrm{L}[M, \phi, s, \gamma] (s,t)
    \\
    & \coloneqq \mathcal{L}_\phi M (s,t) + \frac{\partial M}{\partial t} (s,t) + 
    \left \langle M (s,t) \frac{\partial \phi}{\partial s} (s,t) \right \rangle 
    + \gamma M (s,t) ,
\end{align*}
where $M : \R^n \times \R \to \R^{n \times n}, (s,t) \mapsto M(s,t)$, and $\mathcal{L}_f M$ denotes Lie derivative of $M$ along vector field $f$, e.g., provided $x\in \R^n$ obeying $\dot{x}(t) = f(x(t),u(t),t)$,
\begin{align*}
    \mathcal{L}_f M (x,u,t) \coloneqq \sum_{i=1}^n f_i(x,u,t) \frac{\partial M}{\partial x_i} (x,t).
\end{align*}

\begin{definition}
    \label{def:contraction}
    Consider a uniformly positive definite function $M : \R^n \times \R \to \R^{n \times n}, (s,t) \mapsto M(s,t)$ and a constant $\gamma > 0$.
    Dynamics \eqref{dynamics} is said to be contracting with $M$ and $\gamma$ if the following matrix inequality holds for any $s\in \R^n$ and $t\in \R$:
    \begin{align}
        &\mathrm{L}[M, \phi, s, \gamma](s,t)  \preceq 0 .
        \label{condition_contraction}
    \end{align}
    In addition, we refer to the constant $\gamma$ as a contraction rate and the function $M$ as a contraction metric.
    Dynamics \eqref{dynamics} is simply said to be contracting if there exist $M$ and $\gamma$ with which dynamics \eqref{dynamics} is contracting.
    $\fin$
\end{definition}

The inequality \eqref{condition_contraction} implies the exponential stability of $\delta s \in \R^n$ obeying the following dynamics:
\begin{align}
    \dot{\delta s}(t) &= \frac{\partial \phi}{\partial s}(s(t),t) \delta s(t).
    \label{differential_dynamics}
    \end{align}
Indeed, $V_\delta(\delta s, t) \coloneqq \delta s^\top M(s(t),t) \delta s$ satisfies the following:
\begin{align}
    \delta s^\top  \mathrm{L}[M, \phi, s, \gamma](s(t), t)  \delta s
    = 
    \dot{V}_\delta(\delta s, t) + \gamma V_\delta(\delta s, t) .
    \label{V_delta_contraction}
\end{align}
Thus, \eqref{condition_contraction} indicates $\dot{V}_\delta + \gamma V_\delta \leq 0$, which implies that $V_\delta(\delta s, t)$ is a Lyapunov candidate for \eqref{differential_dynamics}.

If \eqref{dynamics} is contracting, then a path connecting arbitrary two points shrinks exponentially along their trajectories
(see Theorems 2.1 and 2.3 in \cite{hiro}).
Contraction theory allows us to discuss whether a suboptimally controlled trajectory converges to the optimally controlled trajectory.

\subsection{Continuation Method \cite{continuation, cgmres}}

Let the state and input of the system be $x\in \R^n$ and $u \in \R^m$, respectively.
Consider the following state space equation:
\begin{align}
  \dot{x}(t) = f(x(t), u(t), t),
  \label{ssm}
\end{align}
where $f : \R^n \times \R^m \times \R \to \R^n$.

To formulate optimal control problem (OCP), we discretize the equation \eqref{ssm}.
Denote the horizon of MPC by $h$ and the state and input at time $t^k$ by $x^k$ and $u^k$.
Let $x^0=x$ and discretize \eqref{ssm} as follows:
\begin{align}
  x^{k+1} &= f_\mathrm{d}(x^k, u^k, t^k) \ (k=0, \ldots, h-1). \label{eq:discrete_smm}
\end{align}
Typically, such a discretization is made by Euler difference, in which $t^k = t + k\varDelta t, \ f_\mathrm{d}(x^k, u^k, t^k) = x^k + f(x^k, u^k, t^k) \varDelta t$, where $\varDelta t$ is the time interval.

We denote the stage cost by $\ell(x,u,t)$ and the terminal cost by $\Phi(x,t)$, where the functions $\ell$ and $\Phi$ are smooth.
We also denote the design variable by $U \coloneqq [u^0 ; \ldots ; u^{h-1}] \in \R^{hm}$.
For simplicity we express the dependency of the trajectory on $(U,x,t)$ as $x^k(U,x,t)$.
Then, OCP is denoted as follows:
\begin{align}
  U^\ast(x,t) &= \mathrm{arg} \min_{U} ~ V(U,x,t)  ,
  \label{ocp}  \\
  V(U,x,t) &\coloneqq \Phi(x^h(U,x,t)) + \sum_{k=0}^{h-1} \ell (x^k(U,x,t), u^k) .
  \label{V_def}
\end{align}
First-order optimality condition claims that $U = U^\ast(x,t)$ is the solution of the following nonlinear equation system:
\begin{align}
  \zeta(U,x,t) &= 0,  \label{optimality}\\
  \zeta(U,x,t) &\coloneqq \nabla_U V(U,x,t) \in \R^{hm} . \label{zeta_def}
\end{align}
MPC utilizes $U^\ast(x,t)$ to obtain control input.
However, onboard calculation of $U^\ast(x,t)$ within the control frequency requires much computational resources.
Thus, the continuation method calculates $U$ in a suboptimal manner:
it updates $U(t)$ so that, instead of satisfying \eqref{optimality}, $\zeta(U(t), x(t), t) \to 0$ as $t\to \infty$.
Specifically, the continuation method embeds the time derivative of $\zeta(U,x,t)$ in the following \textit{virtual dynamics}:
\begin{align}
  \dot{z}(t) &= \eta(z(t), t)
  \label{virtual},
  \\
  z & = \zeta(U,x,t) ,
  \label{def_z}
\end{align}
where $\eta : \R^{hm} \times \R \to \R^{hm}$.
Note that $\eta$ in \eqref{virtual} is designed so that \eqref{virtual} is asymptotically stable, since $z = 0$ means that $U$ satisfies \eqref{optimality}.
A typical selection is $\eta(z, t) = - c z, \ c>0$.

Substituting \eqref{def_z} into \eqref{virtual} and expanding its time derivative lead us to the following  ordinary differential equation:
\begin{align}
    \dot{U}(t) &= 
    H(U(t),x(t),t)^{-1} b(U(t),x(t),t) , 
    \label{udot} \\
    H(U,x,t) &\coloneq \frac{\partial \zeta}{\partial U} (U,x,t),
    \label{def_H}  \\
    b(U,x,t) &\coloneqq 
    \eta_\zeta (U,x,t)
    - \frac{\partial \zeta}{\partial x}(U,x,t) f(x,u,t)
    - \frac{\partial \zeta}{\partial t} (U,x,t) ,
    \notag
\end{align}
where $\eta_\zeta (U,x,t) \coloneqq \eta(\zeta(U,x,t),t)$.
The control law of suboptimal MPC by the continuation method is $u(t) = \Pi_0 U(t)$, where $\Pi_0 \coloneqq [I_m\ 0\ \cdots \ 0] \in \R^{m\times hm}$ is a projection matrix from $U$ to $u^0$, and $U(t)$ is calculated by \eqref{udot}.
We should remark that the continuation method has a certain \textit{hierarchical structure}: $z(t)$ is determined by the virtual dynamics, and is converted into $U(t)$ via \eqref{def_z}.

This letter presumes the following assumption.

\begin{assumption}
  \label{ass:regularity}
  There exists a constant $\lambda_H>0$ such that $H$ in \eqref{def_H}, i.e., Hesse matrix of $V$ in \eqref{V_def} with respect to $U$, satisfies $H(U,x,t) \succeq \lambda_H I_{hm}$ for all $U\in \R^{hm}, x\in \R^n, t\in \R$.
  $\fin$
\end{assumption}
Assumption \ref{ass:regularity} ensures the regularity of $H$, which is vital for wellposedness of \eqref{udot}.
Note that Assumption \ref{ass:regularity} is sufficient for the second-order necessary condition, which claims that $H(U^\ast(x,t),x,t)\succeq0$ for all $x$ and $t$.
Also,
Assumption \ref{ass:regularity} is equivalent to strong convexity of $V$, which can be guaranteed by, for instance, input convex recurrent neural network \cite{icrnn};
see also Subsection IV.A in \cite{sparse_ocp}.

\begin{remark}
    \label{remark:feedforward}
    The term $- H^{-1} (-\frac{\partial \zeta}{\partial x}f -\frac{\partial \zeta}{\partial t})$ in \eqref{udot}
    becomes $\dot{U}^\ast(x(t),t)$ when $U(t) = U^\ast(x(t),t)$
    because we have
    \begin{align*}
        \frac{\mathrm{d} U^\ast(x(t), t)}{\mathrm{d} t} = \frac{\partial U^\ast}{\partial x} (x(t),t) f(x(t), u(t), t) + \frac{\partial U^\ast}{\partial t} (x(t),t), 
    \end{align*}
    and, from implicit differentiation of \eqref{def_z} at $U^\ast(x,t)$, we have
    \begin{align}
    \frac{\partial U^\ast}{\partial (x,t)} (U^\ast (x,t),x,t) = - (H^\ast)^{-1} \frac{\partial \zeta}{\partial (x,t)} (U^\ast (x,t), x, t)
    ,
    \label{uastdot}
    \end{align}
    where $H^\ast \coloneqq H(U^\ast(x, t),x,t)$.
    Therefore, \eqref{udot} admits $U(t) \equiv U^\ast(x(t),t)$.
    See also Subsection II.D in \cite{tvo-survey} and Theorem 3 in \cite{tvo_contraction}.
    Because contraction theory states the convergence of trajectories, it is relevant to our study whether the suboptimal MPC algorithm admits $U(t)\equiv U^\ast(x(t),t)$.
\end{remark}

\begin{remark}
We impose no inequality constraints on OCP \eqref{ocp} throughout this letter because the control law includes switching and hybrid system analysis is required.
Extension of our results to constrained MPC is left for future work.
$\fin$
\end{remark}

\subsection{Problem Settings for Contraction Analysis}
\label{subsection:problem_setting}

The closed-loop system is formulated as follows:
\begin{align}
    \dot{s}(t) &= \phi(s(t), t)  , \label{dynamics_s}\\
    s &\coloneqq [x; U] \in \R^{n+hm} , \\
    \phi(s, t) &\coloneqq \left[ f(x, \Pi_0 U, t) ; H^{-1}(U,x,t) b(U,x,t) \right] . \notag
\end{align}
In addition, we denote the optimally controlled state by $x^\ast$, which obeys the following ordinary differential equation:
\begin{align}
    \dot{x}^\ast(t) = f(x^\ast(t), \Pi_0 U^\ast(x^\ast(t),t),t)  .
    \label{dynamics_optimal}
\end{align}
The matrix inequality of the contraction of \eqref{dynamics_s} is as follows:
\begin{align}
    \mathrm{L}[M, \phi, s, \gamma](s,t)  \preceq 0 .
    \label{goal}
\end{align}
The goal in this letter is to discuss the convergence of the trajectory $[x(t); U(t)]$ to the optimally controlled trajectory $[x^\ast(t); U^\ast(x^\ast(t),t)]$ by analyzing \eqref{goal}.
The key is the choice of $M(s,t)$, since its random choice could make the inequality \eqref{goal} intractable.
To avoid this, we reflect the aforementioned hierarchical structure of the continuation method to $M$.

\section{MAIN RESULT}

This section provides the main results of this letter.
Subsection A proposes a contraction metric of the continuation method.
Using our proposed metric, Subsection B derives a pair of matrix inequalities that derive \eqref{goal}.
Subsection C provides the proof of our main theorem. 
Since contraction theory discusses convergence not to an equilibrium but to a trajectory, our results in this section can be applied to other MPCs, including economic MPC.

\subsection{Contraction Metric of Continuation Method}

We propose the following metric $M$:
\begin{gather}
  M(s, t) = M_P(x,t) + \kappa M_Q(s, t) \label{metric}  , \\
  M_P(x,t) \coloneqq \begin{bmatrix} P(x,t) & 0 \\ 0 & 0 \end{bmatrix} ,\ \ 
  M_Q(s,t) \coloneqq \left( \frac{\partial \zeta}{\partial s} \right)^\top Q_\zeta(s,t) \frac{\partial \zeta}{\partial s}  
  ,
  \notag
\end{gather}
where $Q_\zeta : \R^{hm} \times \R \to \R^{hm \times hm}$ is defined as $Q_\zeta(s,t) \coloneqq Q(\zeta(s,t), t)$, $P : \R^n \times \R \to \R^{n\times n}, \ Q: \R^{hm} \times \R \to \R^{hm \times hm}$ are uniformly positive definite functions, $\kappa>0$ is a constant, and the arguments $(s,t)$ of $\zeta$ is omitted.
$M_Q$ plays a role of a contraction metric of \eqref{virtual}, while $M_P$ evaluates the contraction of variable $x$ (see also Theorem \ref{thm:matrix_inequality_schur}).

The proposed function $M$ is indeed a metric;
$M$ is positive definite under Assumption \ref{ass:regularity}, and the uniformity holds under the Lipschitzness of $\zeta$ with respect to $x$.

\begin{assumption}
\label{ass:bounded_zetax}
There exists a constant $c_x^\zeta > 0$ such that
    \begin{align*}
        \left\| \frac{\partial \zeta}{\partial x} (U, x, t) \right\| \leq c_x^\zeta  \ \ \  \forall U\in \R^{hm}, x\in\R^n, t\in \R .
    \end{align*}
\end{assumption}

\begin{lemma}
    Under Assumptions \ref{ass:regularity} and \ref{ass:bounded_zetax}, $M$ in \eqref{metric} is uniformly positive definite.
\end{lemma}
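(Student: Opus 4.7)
The plan is to establish $M(s,t) \succeq \epsilon I_{n+hm}$ uniformly in $(s,t)$ by evaluating the quadratic form $v^\top M(s,t) v$ for an arbitrary $v = [v_x; v_U] \in \R^{n+hm}$. Since $s = [x; U]$ and $H = \partial \zeta/\partial U$ by definition, one has $\partial \zeta/\partial s = [\partial\zeta/\partial x,\ H]$. Setting $N := \partial\zeta/\partial x$ and introducing the auxiliary vector $w := N v_x + H v_U$, the definition \eqref{metric} gives
\begin{align*}
v^\top M(s,t) v = v_x^\top P(x,t) v_x + \kappa\, w^\top Q_\zeta(s,t) w,
\end{align*}
and uniform positive definiteness of $P$ and $Q$ produces constants $\epsilon_P, \epsilon_Q > 0$ for which $v^\top M(s,t) v \geq \epsilon_P \|v_x\|^2 + \kappa \epsilon_Q \|w\|^2$.

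Next I would convert this $(v_x, w)$-bound into a genuine $\|v\|$-bound. Assumption \ref{ass:regularity} makes $H$ uniformly invertible with $\|H^{-1}\| \leq 1/\lambda_H$, so the identity $v_U = H^{-1}(w - N v_x)$ combined with Assumption \ref{ass:bounded_zetax} produces a uniform estimate $\|v_U\|^2 \leq a_1 \|v_x\|^2 + a_2 \|w\|^2$ with $a_1, a_2$ depending only on $\lambda_H$ and $c_x^\zeta$; hence $\|v\|^2 \leq a\|v_x\|^2 + a_2 \|w\|^2$ for a suitable $a$. Choosing $\epsilon := \min(\epsilon_P/a,\ \kappa \epsilon_Q / a_2)$ concludes $v^\top M(s,t) v \geq \epsilon \|v\|^2$ uniformly.

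The main obstacle is that $M_Q$ is itself only positive semidefinite: the factor $\partial\zeta/\partial s \in \R^{hm \times (n+hm)}$ has more columns than rows and therefore a nontrivial kernel, so $M_P$ must supply the missing positivity along the $v_x$-direction without being spoiled by the cross term of $M_Q$. Assumption \ref{ass:bounded_zetax} is exactly what is needed here, since without a uniform bound on $\|N\|$ the coefficient $a$ above could blow up along a sequence of $(s,t)$, destroying uniformity of $\epsilon$ even though pointwise positive definiteness would still hold.
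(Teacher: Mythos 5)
Your proof is correct, and it takes a genuinely different and more elementary route than the paper's. You estimate the quadratic form $v^\top M v$ directly: writing $w = (\partial\zeta/\partial x) v_x + H v_U$ gives $v^\top M v = v_x^\top P v_x + \kappa w^\top Q_\zeta w \ge \epsilon_P\|v_x\|^2 + \kappa\epsilon_Q\|w\|^2$, and then $v_U = H^{-1}(w - (\partial\zeta/\partial x) v_x)$ together with Assumptions \ref{ass:regularity} and \ref{ass:bounded_zetax} gives a uniform bound of the form $\|v\|^2 \le a\|v_x\|^2 + a_2\|w\|^2$, so $\epsilon = \min(\epsilon_P/a,\ \kappa\epsilon_Q/a_2)$ does the job. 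The paper instead works at the level of block matrices: it picks an $\epsilon$ satisfying two algebraic constraints, shows the lower-right block of $M - \epsilon I$ is positive definite, and then verifies that the Schur complement with respect to that block is positive semidefinite, invoking the Woodbury formula to simplify the Schur complement before bounding it. Both arguments lean on exactly the same two structural facts --- $\|H^{-1}\|$ uniformly bounded by $1/\lambda_H$, and $\|\partial\zeta/\partial x\|$ uniformly bounded by $c_x^\zeta$ --- but your change-of-variables/quadratic-form argument is shorter, avoids Woodbury and Schur complements entirely, and makes transparent exactly why the cross term between $v_x$ and $v_U$ in $M_Q$ does not spoil positivity. The paper's version has the minor advantage of producing an explicit $\epsilon$ directly in terms of $\epsilon_P, \epsilon_Q, \kappa, \lambda_H, c_x^\zeta$ without an intermediate triangle-inequality step, but yours yields an equally explicit constant with a bit of bookkeeping. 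Your closing remark about why uniformity of $\|\partial\zeta/\partial x\|$ is the crux is exactly right.
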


\begin{proof}
Let $P\succeq \epsilon_P I_n, \ Q \succeq \epsilon_Q I_{hm}$, where $\epsilon_P, \epsilon_Q>0$.
Take $\epsilon \in \R$ that satisfies
\begin{align}
    0 < \epsilon < \kappa \epsilon_Q \lambda_H^2, \ \frac{(c_x^\zeta)^2 \kappa \epsilon_Q}{\kappa \epsilon_Q \lambda_H^2 - \epsilon} \epsilon + \epsilon \leq \epsilon_P . \label{def_epsilon}
\end{align}
$M-\epsilon I_{n+hm}$ is determined as follows:
\begin{align*}
    M - \epsilon I_{n+hm}
    = \begin{bmatrix}
        P - \epsilon I_n + \kappa \left( \frac{\partial \zeta}{\partial x} \right)^\top Q_\zeta \frac{\partial \zeta}{\partial x}
        & \kappa \left( \frac{\partial \zeta}{\partial x} \right)^\top Q_\zeta H
        \\
        \kappa H Q_\zeta \frac{\partial \zeta}{\partial x}
        & \kappa H Q_\zeta H - \epsilon I_{hm}
    \end{bmatrix}  .
\end{align*}
Owing to \eqref{def_epsilon}, we obtain $\kappa H Q H - \epsilon I_{hm} \succeq (\kappa \lambda_H^2 \epsilon_Q - \epsilon) I_{hm} \succ 0_{hm}$.
We denote the Schur complement with respect to the lower right block by $S$.
Then, we obtain
\begin{align*}
    S &= P -\epsilon I_n + \kappa \left( \frac{\partial \zeta}{\partial x} \right)^\top Q_\zeta \frac{\partial \zeta}{\partial x}  \\
    & \hspace{10pt} - \kappa \left( \frac{\partial \zeta}{\partial x} \right)^\top Q_\zeta
    \left( Q_\zeta - \frac{\epsilon}{\kappa} H^{-2} \right)^{-1}
    Q_\zeta \frac{\partial \zeta}{\partial x} \\
    &= P -\epsilon I_n
    + \kappa \left( \frac{\partial \zeta}{\partial x} \right)^\top
    \left( Q_\zeta^{-1} - \frac{\kappa}{\epsilon} H^2 \right)^{-1} \frac{\partial \zeta}{\partial x} \\
    & \succeq \left( \epsilon_P -\epsilon - \frac{(c_x^\zeta)^2 \kappa \epsilon_Q \epsilon}{\kappa \epsilon_Q \lambda_H^2 - \epsilon} \right) I_n ,
\end{align*}
where the second equality originates from the following Woodbury formula:
\begin{align*}
    \left( Q_\zeta - \frac{\epsilon}{\kappa} H^{-2} \right)^{-1} = Q_\zeta^{-1} - Q_\zeta^{-1} \left( Q_\zeta^{-1} - \frac{\kappa}{\epsilon} H^{2} \right)^{-1} Q_\zeta^{-1} ,
\end{align*}
and the final inequality originates from the following fact:
\begin{align*}
    Q_\zeta^{-1} - \frac{\kappa}{\epsilon} H^{2} \preceq \left(\frac{1}{\epsilon_Q} - \frac{\kappa \lambda_H^2}{\epsilon}\right) I_{hm} \preceq 0_{hm} .
\end{align*}
Therefore, owing to \eqref{def_epsilon}, we obtain $S \succeq 0$, which implies $M$ is uniformly positive definite.
\end{proof}

\begin{remark}
\label{remark:ass_zetax}
    Assumption \ref{ass:bounded_zetax} holds true if norms of $\nabla_x \ell$, $\nabla_x \Phi$, and the following $(n+3)$ matrices are upper bounded:
    \begin{align*}
        \frac{\partial f_\mathrm{d}}{\partial (x,u)}, \ \ 
        \frac{\partial (\nabla_x (f_\mathrm{d})_i)}{\partial (x,u)} \ (i=1,\ldots, n), \ \ 
        \frac{\partial (\nabla_x \ell)}{\partial (x,u)}, \ \ 
        \frac{\partial (\nabla_x \Phi)}{\partial x} .
    \end{align*}
\end{remark}

\subsection{Matrix Inequalities for Contraction}

We reformulate the dynamics \eqref{ssm} in terms of residual of optimal control and obtain the following dynamics:
\begin{align}
  \dot{x}(t) &= f_\mathrm{r}(x(t), u_\mathrm{r}(t), t) , \label{ssm_v} \\
  u_\mathrm{r} &\coloneqq u - \Pi_0 U^\ast(x,t) , \label{def_ur}\\
  f_\mathrm{r}(x,u_\mathrm{r},t) &\coloneqq f(x, \Pi_0 U^\ast(x, t) + u_\mathrm{r}, t)  .
\end{align}

To obtain a pair of matrix inequalities that characterize contraction of the closed-loop system, we impose the following assumption:
\begin{assumption}
    \label{ass:bounded_fu}
    There exists a constant $c_u^f > 0$ such that
    \begin{align*}
        \left\| \frac{\partial f}{\partial u} (x,u,t) \right\| \leq c_u^f  \ \ \ \forall x\in R^n, u\in\R^m, t\in\R,
    \end{align*}
    i.e., $f$ in \eqref{ssm} has a Lipschitz constant $c_u^f$ with respect to $u$.
    $\fin$
\end{assumption}
Assumption \ref{ass:bounded_fu} ensures wellposedness of the problem setting because it indicates that the effect of $u$ on $\dot{x}$ is finite.

\begin{theorem}
    \label{thm:matrix_inequality_schur}
    Suppose Assumptions \ref{ass:regularity} and \ref{ass:bounded_fu} hold.
    Suppose uniformly positive definite matrices $P,Q$ and constants $\beta_x, \beta_z>0$ satisfy the following two matrix inequalities:
    \begin{gather}
        \begin{aligned}
        \mathrm{L}[P, f_\mathrm{r}, x, \beta_x](x,u_\mathrm{r},t) + 
        \left\langle P_{xU}(x,u_\mathrm{r},t)K(U,x,t) \right\rangle \preceq 0&   
        \\
        \forall U \in \R^{hm},\ x\in \R^n,\ t \in \R&,
        \end{aligned}
        \label{design_matrix_inequality_P}
        \\
        \mathrm{L}[Q, \eta, z, \beta_z](z,t) \preceq 0   \ \ \ \ \forall z \in \R^{hm},\ t\in \R,
        \label{design_matrix_inequality_Q}
        \\
        P_{xU} (x,u_\mathrm{r},t) \coloneqq P(x,t) \frac{\partial f_\mathrm{r}}{\partial u_\mathrm{r}} (x,u_\mathrm{r},t) \Pi_0 , 
        \label{def_Pbar}
        \\
        K(U,x,t) \coloneqq - H^{-1}(U,x,t) \frac{\partial \zeta}{\partial x}(U,x,t) - \frac{\partial U^\ast}{\partial x}(x,t) ,
        \label{def_K}
    \end{gather}
    where $u_\mathrm{r}$ in \eqref{design_matrix_inequality_P} is defined using $U$ and $x$ as in \eqref{def_ur}.
    Then, there exists $\gamma>0$ such that \eqref{goal} holds true for $M$ in \eqref{metric}.
\end{theorem}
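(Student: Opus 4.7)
The plan is to exploit the hierarchical structure baked into $M$ by changing tangent-space coordinates from $\delta s=(\delta x,\delta U)$ to $(\delta x,\delta z)$, where $\delta z=\frac{\partial \zeta}{\partial s}\delta s$. Under this change the metric becomes block-diagonal, and along trajectories of \eqref{dynamics_s} the virtual dynamics \eqref{virtual} decouples the $z$-block; the two hypothesis inequalities then apply directly to the diagonal blocks, and the resulting cross term is dominated by choosing $\kappa$ suitably.

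First I would introduce the Jacobian
$$
T(s,t):=\begin{bmatrix} I_n & 0 \\ -H^{-1}\frac{\partial\zeta}{\partial x} & H^{-1}\end{bmatrix},
$$
which maps $(\delta x,\delta z)$ to $(\delta x,\delta U)$, and verify by direct computation that $T^\top M T=\mathrm{diag}(P,\kappa Q_\zeta)$, using $\frac{\partial\zeta}{\partial s}T=[0\ I_{hm}]$. Because \eqref{udot} is built precisely so that $\dot z=\eta(z,t)$ along solutions of \eqref{dynamics_s}, the differential $\delta z$ obeys $\dot{\delta z}=\frac{\partial\eta}{\partial z}\delta z$, while substituting $\delta U=H^{-1}(\delta z-\frac{\partial\zeta}{\partial x}\delta x)$ into $\frac{\partial f}{\partial x}\delta x+\frac{\partial f}{\partial u}\Pi_0\delta U$ gives $\dot{\delta x}=\tilde A\,\delta x+\tilde B\,\delta z$ with
$$
\tilde A=\frac{\partial f_\mathrm r}{\partial x}+\frac{\partial f}{\partial u}\Pi_0 K,\qquad \tilde B=\frac{\partial f}{\partial u}\Pi_0 H^{-1}.
$$
The identity for $\tilde A$ follows from $\frac{\partial f_\mathrm r}{\partial x}=\frac{\partial f}{\partial x}+\frac{\partial f}{\partial u}\Pi_0\frac{\partial U^\ast}{\partial x}$ together with \eqref{def_K}; this is exactly why the correction $\langle P_{xU}K\rangle$ enters \eqref{design_matrix_inequality_P}.

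With these identifications I would compute $\dot V_\delta+\gamma V_\delta$ blockwise for $V_\delta=\delta x^\top P\delta x+\kappa\delta z^\top Q_\zeta\delta z$. The $\delta x$ contribution equals $\delta x^\top(\mathrm{L}[P,f_\mathrm r,x,\gamma]+\langle P_{xU}K\rangle)\delta x$; decomposing $\mathrm{L}[P,f_\mathrm r,x,\gamma]=\mathrm{L}[P,f_\mathrm r,x,\beta_x]+(\gamma-\beta_x)P$ and applying \eqref{design_matrix_inequality_P} bounds it by $(\gamma-\beta_x)\delta x^\top P\delta x$. The $\delta z$ contribution equals $\kappa\delta z^\top\mathrm{L}[Q,\eta,z,\gamma]\delta z$, which by the same expansion and \eqref{design_matrix_inequality_Q} is at most $\kappa(\gamma-\beta_z)\delta z^\top Q\delta z$. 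The only surviving coupling is $2\delta x^\top P_{xU}H^{-1}\delta z$, yielding
$$
\dot V_\delta+\gamma V_\delta\leq \begin{bmatrix}\delta x\\\delta z\end{bmatrix}^{\!\top}\!\begin{bmatrix}-(\beta_x-\gamma)P & P_{xU}H^{-1}\\ H^{-\top}P_{xU}^\top & -\kappa(\beta_z-\gamma)Q\end{bmatrix}\!\begin{bmatrix}\delta x\\\delta z\end{bmatrix}.
$$

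To finish I would pick $\gamma\in(0,\min(\beta_x,\beta_z))$ and take $\kappa$ large enough that the Schur complement condition $\kappa(\beta_x-\gamma)(\beta_z-\gamma)P\succeq P_{xU}H^{-1}Q^{-1}H^{-\top}P_{xU}^\top$ holds uniformly; Assumption \ref{ass:bounded_fu} bounds $\|P_{xU}\|$ through $c_u^f$, Assumption \ref{ass:regularity} gives $\|H^{-1}\|\leq 1/\lambda_H$, and the uniform positive definiteness (and implicit boundedness) of $P$ and $Q$ supplies the rest. The main obstacle I expect is the algebraic bookkeeping that produces $\tilde A$: the Jacobian $-H^{-1}\frac{\partial\zeta}{\partial x}$ that naturally appears in $(x,z)$-coordinates does not coincide with $\frac{\partial U^\ast}{\partial x}$ off the optimal manifold, and the discrepancy is precisely $K$, which is exactly why \eqref{design_matrix_inequality_P} must absorb $\langle P_{xU}K\rangle$. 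Once this identification is made, the remainder is a standard Schur complement.
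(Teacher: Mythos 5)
Your proof is correct and, modulo presentation, algebraically equivalent to the paper's. The paper proceeds via Lemma~\ref{thm:matrix_inequality}, which rewrites $\mathrm{L}[M,\phi,s,\gamma]$ in the block form
\begin{equation*}
\begin{bmatrix}\mathrm{L}^{(\gamma)}_x & \bar P_{xU}\\ \bar P_{xU}^\top & 0\end{bmatrix}
+\kappa\Bigl(\tfrac{\partial\zeta}{\partial s}\Bigr)^{\!\top}\mathrm{L}^{(\gamma)}_\zeta\,\tfrac{\partial\zeta}{\partial s},
\end{equation*}
and then in Lemma~\ref{prop:matrix_inequality} takes the Schur complement of the lower-right block $\kappa H\mathrm{L}^{(\gamma)}_\zeta H$. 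Your congruence by $T$ is precisely the change of basis that realizes that Schur complement: $T^\top M T=\mathrm{diag}(P,\kappa Q_\zeta)$, and $\frac{\partial\zeta}{\partial s}T=[0\ \ I_{hm}]$ is the same identity the paper exploits. What you buy is a cleaner separation of concerns: the key fact that $\delta z$ decouples into $\dot{\delta z}=\frac{\partial\eta}{\partial z}\delta z$ (which the paper derives implicitly through the chain of Lie-derivative identities in Lemma~\ref{thm:matrix_inequality}) becomes the headline, and the remaining coupling $2\delta x^\top P_{xU}H^{-1}\delta z$ is isolated and handled by the standard one-step Schur argument with $\kappa$ large. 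Two minor remarks: (i) the $Q$ in your final block matrix should be $Q_\zeta$ (evaluated at $z=\zeta(s,t)$), which is immaterial since the inequalities are required for all $z$; and (ii) your identity $\frac{\partial f_\mathrm r}{\partial x}=\frac{\partial f}{\partial x}+\frac{\partial f}{\partial u}\Pi_0\frac{\partial U^\ast}{\partial x}$ carries the correct sign — the paper's final proof line states it with a spurious minus sign, which appears to be a typo. Like the paper, you implicitly require an upper bound $p_\mathrm{max}$ on $P$ for the choice of $\kappa$; the theorem statement only asserts uniform positive definiteness, so this is inherited from the paper rather than a gap you introduced.
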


The proof of Theorem \ref{thm:matrix_inequality_schur} is presented in Subsection \ref{subsection:proof}.
Note that \eqref{design_matrix_inequality_Q} implies that \eqref{virtual} is contracting with $Q$ and $\beta_z$ and that the term $\mathrm{L}[P, f_\mathrm{r}, x, \gamma]$ in \eqref{design_matrix_inequality_P} expresses the contraction of \eqref{ssm_v} with $P$ and $\beta_x$.
Moreover, \eqref{uastdot} implies $K=0$ when $U=U^\ast(x,t)$.
Therefore, the term $\left\langle P_{xU}K \right\rangle$ indicates the impact of the suboptimality on the contraction of the optimally controlled dynamics.
By binding $\left\langle P_{xU}K \right\rangle$ with $P$, we obtain the following corollary.

\begin{corollary}
    \label{cor:matrix_inequality_reduced}
    Suppose that Assumptions \ref{ass:regularity} and \ref{ass:bounded_fu} hold and that there exist uniformly positive definite matrices $P,Q$ and constants $\beta_x, \beta_z, \beta_\mathrm{p}$ with $\beta_x > \beta_\mathrm{p} > 0$, $\beta_z > 0$ such that \eqref{design_matrix_inequality_Q} and the following two matrix inequalities hold:
    \begin{align}
        \langle P_{xU}(x,u,t)  K(U,x,t) \rangle - \beta_\mathrm{p} P(x,t) & \preceq 0 \ \ \ \forall U,x,t,
        \label{design_matrix_inequality_GK}
        \\
        \mathrm{L}[P, f_\mathrm{r}, x, \beta_x + \beta_\mathrm{p}](x,u_\mathrm{r},t) &\preceq 0   \ \ \ \forall x, u_\mathrm{r}, t ,
        \label{design_matrix_inequality_Pxxopt}
    \end{align}
    where $P_{xU}$ is defined as in \eqref{def_Pbar}.
    Then, there exists $\gamma>0$ such that \eqref{goal} holds for $M$ in \eqref{metric}.
    Moreover, the closed-loop system \eqref{dynamics_s} is contracting under Assumption \ref{ass:bounded_zetax}.
    Furthermore, if $\eta(0,t)=0$ for any $t \in \R$ in \eqref{virtual}, then $x(t)$ determined by \eqref{dynamics_s} converge to $x(t)$ determined by \eqref{dynamics_optimal}.
\end{corollary}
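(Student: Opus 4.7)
The plan is to derive the corollary as a direct consequence of Theorem \ref{thm:matrix_inequality_schur}. The key observation is that the new hypothesis pair \eqref{design_matrix_inequality_GK}, \eqref{design_matrix_inequality_Pxxopt} packages the suboptimality-induced cross term $\langle P_{xU} K \rangle$ into a uniform bound by $\beta_\mathrm{p} P$, which can then be absorbed into the contraction budget of the $x$-dynamics to recover \eqref{design_matrix_inequality_P}. Once that is done, Theorem \ref{thm:matrix_inequality_schur} and Lemma 1 deliver the first two assertions, and the third assertion will follow by verifying that the optimally controlled curve is in fact a solution of \eqref{dynamics_s} and invoking the standard contraction-implies-convergence principle.

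For the reduction, I would exploit the obvious linearity of $\mathrm{L}[\cdot,\cdot,\cdot,\cdot]$ in its fourth argument: $\mathrm{L}[P, f_\mathrm{r}, x, \beta_x + \beta_\mathrm{p}] = \mathrm{L}[P, f_\mathrm{r}, x, \beta_x] + \beta_\mathrm{p} P$. Substituting this into \eqref{design_matrix_inequality_Pxxopt} yields $\mathrm{L}[P, f_\mathrm{r}, x, \beta_x] \preceq -\beta_\mathrm{p} P$, and adding \eqref{design_matrix_inequality_GK}, which reads $\langle P_{xU} K \rangle \preceq \beta_\mathrm{p} P$, the two $\beta_\mathrm{p} P$ contributions cancel and produce
\begin{equation*}
\mathrm{L}[P, f_\mathrm{r}, x, \beta_x](x,u_\mathrm{r},t) + \langle P_{xU}(x,u_\mathrm{r},t) K(U,x,t) \rangle \preceq 0
\end{equation*}
for all $U, x, t$, with $u_\mathrm{r}$ defined from $(U,x,t)$ as in \eqref{def_ur}. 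This is exactly \eqref{design_matrix_inequality_P}, so together with \eqref{design_matrix_inequality_Q} Theorem \ref{thm:matrix_inequality_schur} supplies a $\gamma > 0$ for which \eqref{goal} holds. Under Assumption \ref{ass:bounded_zetax}, Lemma 1 then guarantees uniform positive definiteness of $M$, so Definition \ref{def:contraction} is satisfied and the closed-loop system \eqref{dynamics_s} is contracting.

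For the final convergence claim, I would verify that the optimal pair $s^\ast(t) \coloneqq [x^\ast(t); U^\ast(x^\ast(t),t)]$ is a solution of \eqref{dynamics_s}. Its first block is \eqref{dynamics_optimal} by definition. For the second block, the identity $\zeta(U^\ast(x,t),x,t) = 0$ combined with $\eta(0,t) = 0$ forces $\eta_\zeta = 0$ along $s^\ast$, so $b$ reduces to $-\frac{\partial \zeta}{\partial x} f - \frac{\partial \zeta}{\partial t}$, and \eqref{uastdot} then identifies $H^{-1} b$ with $\frac{\mathrm{d}}{\mathrm{d}t} U^\ast(x^\ast(t),t)$. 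Hence $s^\ast(t)$ is a trajectory of \eqref{dynamics_s}, and the contraction property, applied between the trajectories $s(t) = [x(t); U(t)]$ and $s^\ast(t)$ via the path-integration argument of Theorems 2.1 and 2.3 in \cite{hiro} already invoked in Section II.A, gives $s(t) \to s^\ast(t)$; projecting onto the first block yields $x(t) \to x^\ast(t)$.

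The proof is largely mechanical once one recognizes the bilinear structure of $\mathrm{L}[\cdot,\cdot,\cdot,\cdot]$, so I do not anticipate a substantive obstacle. The only care required is the bookkeeping to ensure the $u_\mathrm{r}$-argument in \eqref{design_matrix_inequality_GK} is taken consistently with the definition of $P_{xU}$ in \eqref{def_Pbar} and with $u_\mathrm{r}$ built from $U$ and $x$ through \eqref{def_ur}, so that the inequalities can be added pointwise in the same $(U, x, t)$.
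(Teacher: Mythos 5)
Your proposal is correct and follows the same route as the paper's proof: reduce to Theorem \ref{thm:matrix_inequality_schur} by combining \eqref{design_matrix_inequality_GK} and \eqref{design_matrix_inequality_Pxxopt} into \eqref{design_matrix_inequality_P}, then obtain convergence by noting that $\eta(0,t)=0$ makes the optimally controlled curve $[x^\ast(t);U^\ast(x^\ast(t),t)]$ a solution of \eqref{dynamics_s}. You merely supply the short algebraic steps (linearity of $\mathrm{L}[\cdot,\cdot,\cdot,\gamma]$ in $\gamma$, and the explicit check via \eqref{uastdot} that $s^\ast$ solves the closed loop) that the paper leaves as a one-line remark.
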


\begin{proof}
The existence of $\gamma$ and the contraction of the closed-loop system is implied by Theorem \ref{thm:matrix_inequality_schur} because \eqref{design_matrix_inequality_GK} and \eqref{design_matrix_inequality_Pxxopt} implies \eqref{design_matrix_inequality_P}.
Also, from $\eta(0,t)=0$, \eqref{virtual} admits $z(t)\equiv 0$, which means that \eqref{dynamics_s} admits $U(t) \equiv U^\ast(x(t),t)$.
Therefore, the contraction of \eqref{dynamics_s} implies that $[x(t); U(t)]$ obeying \eqref{dynamics_s} converges to $[x(t); U^\ast(x(t),t)]$, where $x(t)$ is the solution of \eqref{dynamics_optimal}.
\end{proof}

The examined space of \eqref{design_matrix_inequality_Pxxopt} is smaller than that of \eqref{design_matrix_inequality_P}.
Besides, under Assumptions \ref{ass:regularity}, \ref{ass:bounded_zetax}, and \ref{ass:bounded_fu},
\eqref{design_matrix_inequality_GK} holds if $2 c_u^f c_x^\zeta p_\mathrm{max} \leq \beta_\mathrm{p} p_\mathrm{min} \lambda_H$ holds, where $p_\mathrm{min} I_n \preceq P(x,t) \preceq p_\mathrm{max} I_n$ for all $x \in \R^n,t \in \R$.
The matrix inequality \eqref{design_matrix_inequality_Pxxopt} implies the contraction of the optimally controlled closed-loop system.
MPC design procedure that guarantees \eqref{design_matrix_inequality_Pxxopt} is left for future work.
One remedy is to directly impose \eqref{design_matrix_inequality_Pxxopt} on OCP as in \cite{wang2017}.

\subsection{Proof of Theorem \ref{thm:matrix_inequality_schur}}
\label{subsection:proof}

\begin{lemma}
    \label{thm:matrix_inequality}
    For $M$ in \eqref{metric}, the left hand side of \eqref{goal} is reformulated as follows:
    \begin{align}
        \mathrm{L}[M, \phi, s, \gamma]
        &=
        \begin{bmatrix} \mathrm{L}^{(\gamma)}_x & \bar{P}_{xU} \\ \bar{P}_{xU}^\top & 0 \end{bmatrix}
        + \kappa \left( \frac{\partial \zeta}{\partial s} \right)^\top \mathrm{L}^{(\gamma)}_\zeta \frac{\partial \zeta}{\partial s} ,
        \label{matrix_contraction_continuation}
        \\
        \mathrm{L}^{(\gamma)}_x (x,u,t) &\coloneqq \mathrm{L}[P, f, x, \gamma](x,u,t), 
        \label{def_Pxx}
        \\
        \mathrm{L}^{(\gamma)}_\zeta (s,t) &\coloneqq \mathrm{L}^{(\gamma)}_z (\zeta(s,t),t) ,
        \label{def_Lzeta}
        \\
        \mathrm{L}^{(\gamma)}_z (z,t) &\coloneqq \mathrm{L}[Q, \eta, z, \gamma](z,t),
        \label{def_Lz}
        \\
        \bar{P}_{xU}(x,u,t) &\coloneqq P_{xU}(x, u - \Pi_0 U^\ast(x,t), t)
        ,
        \label{def_PxU}
    \end{align}
    where $G$ is defined in \eqref{def_Pbar}, and we omit the arguments $(s,t)$ of $\mathrm{L}[M, \phi, s, \gamma]$, $\zeta$, and $\mathrm{L}^{(\gamma)}_\zeta$ and $(x,u,t)$ of $\mathrm{L}^{(\gamma)}_x$ and $\bar{P}_{xU}$ in \eqref{matrix_contraction_continuation} for notational simplicity.
\end{lemma}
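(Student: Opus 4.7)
The plan is to exploit the linearity of $\mathrm{L}[\,\cdot\,, \phi, s, \gamma]$ in its first argument, write $\mathrm{L}[M, \phi, s, \gamma] = \mathrm{L}[M_P, \phi, s, \gamma] + \kappa\,\mathrm{L}[M_Q, \phi, s, \gamma]$, and analyze each summand separately against the target expression in \eqref{matrix_contraction_continuation}.

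For the $M_P$ contribution I would use that $M_P(x,t)$ is supported only on the upper-left block and depends on neither $U$ nor $t$ in its zero entries. Hence $\mathcal{L}_\phi M_P$ collapses to $\mathcal{L}_f P$ in the top-left block, while $\partial M_P/\partial t$ and $\gamma M_P$ contribute only to the same block. Writing $\partial\phi/\partial s$ in the block form induced by $s=[x;U]$, the top row is $[\partial f/\partial x,\;(\partial f/\partial u)\Pi_0]$, so $M_P\,\partial\phi/\partial s$ populates only its top row with $[P\,\partial f/\partial x,\;P\,(\partial f/\partial u)\Pi_0]$. Symmetrizing and summing the four terms yields $\mathrm{L}[P, f, x, \gamma](x, u, t) = \mathrm{L}^{(\gamma)}_x$ in the upper-left, zero in the lower-right, and off-diagonal $P\,(\partial f/\partial u)\Pi_0$; by \eqref{def_Pbar} and the identity $(\partial f_\mathrm{r}/\partial u_\mathrm{r})(x,u_\mathrm{r},t) = (\partial f/\partial u)(x,\Pi_0 U^\ast+u_\mathrm{r},t)$, this off-diagonal equals $\bar P_{xU}(x,u,t)$ from \eqref{def_PxU}, matching the first summand in \eqref{matrix_contraction_continuation}.

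The $M_Q$ contribution is the harder step and relies on an identity specific to the continuation method. By construction, along any trajectory of $\dot s=\phi$ the variable $z = \zeta(s,t)$ satisfies $\dot z = \eta(z,t)$, equivalently the pointwise identity $J\phi + \partial\zeta/\partial t = \eta(\zeta, t)$ with $J\coloneqq \partial\zeta/\partial s$. Differentiating this identity in $s$ and invoking Schwarz's theorem on $\zeta$ produces the key Jacobian relation
\begin{align*}
\frac{dJ}{dt} \;=\; \frac{\partial \eta}{\partial z}(\zeta,t)\,J \;-\; J\,\frac{\partial \phi}{\partial s},
\end{align*}
where $dJ/dt$ is the total derivative along $\phi$. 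Expanding $\mathrm{L}[M_Q, \phi, s, \gamma] = dM_Q/dt + \langle M_Q\,\partial\phi/\partial s\rangle + \gamma M_Q$ via the product rule on $M_Q = J^\top Q_\zeta J$, substituting the displayed identity for $dJ/dt$, and using $dQ_\zeta/dt = \mathcal{L}_\eta Q(\zeta,t) + (\partial Q/\partial t)(\zeta,t)$ (since $z$ evolves under $\eta$), the two cross terms involving $\partial\phi/\partial s$ produced by the product rule cancel exactly against $\langle M_Q\,\partial\phi/\partial s\rangle$. What remains collapses to $J^\top\bigl[\langle Q\,\partial\eta/\partial z\rangle + \mathcal{L}_\eta Q + \partial Q/\partial t + \gamma Q\bigr](\zeta,t)\,J = J^\top \mathrm{L}^{(\gamma)}_\zeta J$, which is the second summand in \eqref{matrix_contraction_continuation}. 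Adding the two pieces yields the claim.

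The main obstacle I expect is establishing the Jacobian identity for $dJ/dt$: because $J$ is itself a first derivative of $\zeta$, its total time derivative along $\phi$ involves mixed second partials of $\zeta$, and reassembling them into $(\partial\eta/\partial z)\,J - J\,(\partial\phi/\partial s)$ requires a careful combination of Schwarz's theorem with the defining identity $J\phi+\partial\zeta/\partial t=\eta(\zeta,t)$ of the continuation method. Once this identity is in hand, the rest of the $M_Q$ computation is purely algebraic cancellation, and the $M_P$ computation is immediate from the block structure of $M_P$ and $\phi$.
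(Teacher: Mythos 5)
Your proposal is correct and follows essentially the same route as the paper: both split $\mathrm{L}[M,\phi,s,\gamma]$ linearly into the $M_P$ and $M_Q$ contributions, both obtain the $M_P$ block form directly from the block structure of $M_P$ and $\partial\phi/\partial s$, and both derive the key total-derivative identity for $J=\partial\zeta/\partial s$ by differentiating the virtual-dynamics constraint $\mathcal{L}_\phi\zeta+\partial\zeta/\partial t=\eta(\zeta,t)$ in $s$ (the paper writes it as an expansion in partial derivatives rather than the compact $\tfrac{dJ}{dt}=\tfrac{\partial\eta}{\partial z}J-J\tfrac{\partial\phi}{\partial s}$, but these are the same relation). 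The subsequent cancellation of the $\langle M_Q\,\partial\phi/\partial s\rangle$ terms and the pull-back of $\mathrm{L}[Q,\eta,z,\gamma]$ through $J^\top(\cdot)J$ match the paper's computation as well.
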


\begin{proof}
    In this proof we omit arguments $(s,t)$ of $\phi$, $\zeta$, $M_Q$, and $Q_\zeta$ and $(z,t)$ of $\eta$ unless it's confusing.
    Straightforward algebraic calculation leads us to the following relation:
    \begin{align*}
    \mathrm{L}[M_P, \phi, s, \gamma](s,t)
    = \begin{bmatrix} \mathrm{L}^{(\gamma)}_x (s,t) & \bar{P}_{xU}(x,u,t) \\ \bar{P}_{xU}(x,u,t)^\top & 0 \end{bmatrix} .
    \end{align*}
    
    Multiplying $\frac{\partial \zeta}{\partial U}$ to \eqref{dynamics_s} derives the following relation:
    \begin{align}
        \eta(\zeta(s,t),t) = \mathcal{L}_{\phi} \zeta (s,t) + \frac{\partial \zeta}{\partial t} (s,t)
        .
        \label{eq:zetadot}
    \end{align}
    Differentiating \eqref{eq:zetadot} with respect to $s$ derives the following:
    \begin{align*}
        \left. \frac{\partial \eta}{\partial z} \right|_{z=\zeta}  \frac{\partial \zeta}{\partial s} 
        &= \frac{\partial \zeta}{\partial s} \frac{\partial \phi}{\partial s} 
        + \left [ \frac{\partial}{\partial s} \left .\left( \sum_{i=1}^{n+hm} v_i \frac{\partial \zeta}{\partial s_i} \right) \right ] \right|_{v=\phi}
        + \frac{\partial}{\partial s} \frac{\partial \zeta}{\partial t}
        \\
        &=
        \frac{\partial \zeta}{\partial s} \frac{\partial \phi}{\partial s} 
        + 
        \sum_{i=1}^{n+hm} \phi_i \frac{\partial}{\partial s_i} \frac{\partial \zeta}{\partial s} + \frac{\partial}{\partial t} \frac{\partial \zeta}{\partial s} 
    \end{align*}
    where the second equality comes from the commutability of derivative operator.
    In addition, \eqref{eq:zetadot} implies the followings:
    \begin{align*}
        \mathcal{L}_{\eta} Q(\zeta(s,t), t) + \frac{\partial Q}{\partial t}(\zeta(s,t),t)
        =
        \mathcal{L}_{\phi} Q_\zeta(s,t) + \frac{\partial Q_\zeta}{\partial t}(s,t) .
    \end{align*}
    Furthermore, we have
    \begin{align*}
    \mathcal{L}_{\phi} M_Q + \frac{\partial M_Q}{\partial t}
    &= 
    \left( \frac{\partial \zeta}{\partial s} \right)^\top \left(\mathcal{L}_{\phi}Q_\zeta + \frac{\partial Q_\zeta}{\partial t}\right) \frac{\partial \zeta}{\partial s}
    \\
    & \hspace{-20pt} + 
    \left \langle
    \left( \frac{\partial \zeta}{\partial s} \right)^\top Q_\zeta \cdot \left( \sum_{i=1}^{n+hm} \phi_i \frac{\partial}{\partial s_i} \frac{\partial \zeta}{\partial s} + \frac{\partial}{\partial t} \frac{\partial \zeta}{\partial s} \right)
    \right \rangle 
    .
    \end{align*}
    For simplicity, we denote $\frac{\partial \zeta}{\partial s}(s,t)$ by $\zeta_s(s,t)$.
    Then, by a simple algebraic calculation, we obtain
    \begin{align*}
        &\zeta_s^\top
        \left(\mathcal{L}_\eta Q |_{z=\zeta} + \left . \frac{\partial Q}{\partial t} \right|_{z=\zeta} + 
        \left \langle Q_\zeta \left . \frac{\partial \eta}{\partial z} \right|_{z=\zeta} \right \rangle \right)
        \zeta_s
        \\
        &=
        \zeta_s^\top
        \left(\mathcal{L}_\phi Q_\zeta + \frac{\partial Q_\zeta}{\partial t} \right)
        \zeta_s
         + 
        \left \langle \zeta_s^\top Q_\zeta \left . \frac{\partial \eta}{\partial z} \right|_{z=\zeta} \zeta_s \right \rangle
        \\
        &=
        \mathcal{L}_{\phi} M_Q + \frac{\partial M_Q}{\partial t}
         + 
        \left \langle \zeta_s^\top Q_\zeta \zeta_s \frac{\partial \phi}{\partial s} \right \rangle  ,
    \end{align*}
    where the arguments $(s,t)$ are omitted.
    This relation implies $\zeta_s (s,t)^\top \mathrm{L}^{(\gamma)}_\zeta (s,t) \zeta_s (s,t) = \mathrm{L}[M_Q, \phi, s, \gamma] (s,t)$.
    Therefore, \eqref{matrix_contraction_continuation} holds true.
\end{proof}

\begin{lemma}
    \label{prop:matrix_inequality}
    Suppose Assumptions \ref{ass:regularity} and \ref{ass:bounded_fu} hold.
    Suppose there exist uniformly positive definite matrices $P,Q$ and constants $\beta_x, \beta_z>0$ such that \eqref{design_matrix_inequality_Q} and the following matrix inequality hold:
    \begin{align}
        \mathrm{L}^{(\beta_x)}_x (x,u,t)
        - \left \langle \bar{P}_{xU} (x,u,t) H^{-1} (U,x,t) \frac{\partial \zeta}{\partial x} (U,x,t) \right \rangle
        \notag
        &\preceq 0 
        \\
        \forall U \in \R^{hm}, \ x\in \R^n, \ t\in \R,&
        \label{design_matrix_inequality}
    \end{align}
    where $u=\Pi_0 U$, and $\bar{P}_{xU}$ and $\mathrm{L}^{(\cdot)}_x$ are defined in \eqref{def_PxU} and \eqref{def_Pxx}, respectively.
    Then, there exist $M$ and $\gamma$ satisfying \eqref{goal}.
\end{lemma}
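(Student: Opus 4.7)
The plan is to apply Lemma \ref{thm:matrix_inequality} to rewrite $\mathrm{L}[M,\phi,s,\gamma]$ as an $(n+hm)\times(n+hm)$ block matrix and then establish its negative semi-definiteness via a Schur complement with respect to the lower-right block. Since $\frac{\partial \zeta}{\partial s} = \left[\frac{\partial \zeta}{\partial x},\ H\right]$ (recall $H=\frac{\partial \zeta}{\partial U}$) and the first summand in \eqref{matrix_contraction_continuation} has a zero lower-right block, the expansion reads
\begin{align*}
\mathrm{L}[M,\phi,s,\gamma] = \begin{bmatrix} A & B \\ B^\top & C \end{bmatrix},
\end{align*}
with $A = \mathrm{L}^{(\gamma)}_x + \kappa \left(\frac{\partial \zeta}{\partial x}\right)^\top \mathrm{L}^{(\gamma)}_\zeta \frac{\partial \zeta}{\partial x}$, $B = \bar{P}_{xU} + \kappa \left(\frac{\partial \zeta}{\partial x}\right)^\top \mathrm{L}^{(\gamma)}_\zeta H$, and $C = \kappa H \mathrm{L}^{(\gamma)}_\zeta H$. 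I would fix $\gamma \in (0, \min(\beta_x,\beta_z))$. Writing $\mathrm{L}^{(\gamma)}_\zeta = \mathrm{L}^{(\beta_z)}_\zeta - (\beta_z-\gamma) Q_\zeta$ and applying \eqref{design_matrix_inequality_Q} yields $\mathrm{L}^{(\gamma)}_\zeta \preceq -(\beta_z-\gamma)\epsilon_Q I_{hm}$, whence Assumption \ref{ass:regularity} gives $C \preceq -\kappa(\beta_z-\gamma)\epsilon_Q \lambda_H^2 I_{hm} \prec 0$. The Schur complement therefore reduces \eqref{goal} to $A - B C^{-1} B^\top \preceq 0$.

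The key step is a cancellation of $\kappa$-linear terms. Setting $B_1 = \left(\frac{\partial \zeta}{\partial x}\right)^\top \mathrm{L}^{(\gamma)}_\zeta H$ and $C_1 = H \mathrm{L}^{(\gamma)}_\zeta H$, one has $B_1 C_1^{-1} = \left(\frac{\partial \zeta}{\partial x}\right)^\top H^{-1}$ and hence $\kappa B_1 C_1^{-1} B_1^\top = \kappa \left(\frac{\partial \zeta}{\partial x}\right)^\top \mathrm{L}^{(\gamma)}_\zeta \frac{\partial \zeta}{\partial x}$, exactly matching the $\kappa$-linear contribution in $A$. Expanding $B C^{-1} B^\top$ term by term, this cancellation leaves
\begin{align*}
A - B C^{-1} B^\top = \mathrm{L}^{(\gamma)}_x - \left\langle \bar{P}_{xU} H^{-1} \frac{\partial \zeta}{\partial x} \right\rangle - \frac{1}{\kappa}\, \bar{P}_{xU} H^{-1} (\mathrm{L}^{(\gamma)}_\zeta)^{-1} H^{-1} \bar{P}_{xU}^\top.
\end{align*}
Splitting $\mathrm{L}^{(\gamma)}_x = \mathrm{L}^{(\beta_x)}_x - (\beta_x-\gamma)P$ and invoking \eqref{design_matrix_inequality}, the first two summands are at most $-(\beta_x-\gamma)\epsilon_P I_n$, uniformly negative definite. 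This is precisely why the hierarchical metric \eqref{metric} is the right choice: the Schur complement turns the $x$--$U$ cross-coupling into an $O(1/\kappa)$ residual instead of leaving an $O(\kappa)$ term.

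It then remains to control the residual $-\frac{1}{\kappa}\,\bar{P}_{xU} H^{-1} (\mathrm{L}^{(\gamma)}_\zeta)^{-1} H^{-1}\bar{P}_{xU}^\top \succeq 0$. Assumption \ref{ass:bounded_fu} combined with a uniform upper bound on $P$ bounds $\|\bar{P}_{xU}\|$, Assumption \ref{ass:regularity} bounds $\|H^{-1}\|$ by $\lambda_H^{-1}$, and $\mathrm{L}^{(\gamma)}_\zeta \preceq -(\beta_z-\gamma)\epsilon_Q I_{hm}$ bounds $\|(\mathrm{L}^{(\gamma)}_\zeta)^{-1}\|$ by $[(\beta_z-\gamma)\epsilon_Q]^{-1}$. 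Together these give a uniform estimate of the form $\frac{c}{\kappa} I_n$ for some $c>0$ independent of $(U,x,t)$, so taking $\kappa$ large enough to force $c/\kappa < (\beta_x-\gamma)\epsilon_P$ yields $A - B C^{-1} B^\top \preceq 0$ and hence \eqref{goal}. The main obstacle I anticipate is making these bounds genuinely uniform in $(U,x,t)$; in particular, the argument implicitly requires a uniform upper bound on the design metric $P$, which is a mild well-conditioning hypothesis standard in contraction analysis but not explicitly listed among the stated assumptions.
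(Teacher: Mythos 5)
Your proposal is correct and follows essentially the same route as the paper: apply Lemma \ref{thm:matrix_inequality}, verify the lower-right block $\kappa H \mathrm{L}^{(\gamma)}_\zeta H$ is negative definite, take the Schur complement, observe the $\kappa$-linear cross terms cancel exactly so that $A - BC^{-1}B^\top = \mathrm{L}^{(\gamma)}_x - \langle \bar{P}_{xU} H^{-1}\frac{\partial\zeta}{\partial x}\rangle - \kappa^{-1}N$, bound the first two summands via \eqref{design_matrix_inequality} and the slack $(\beta_x-\gamma)P$, and absorb the $O(1/\kappa)$ residual $N$ by choosing $\kappa$ large (the paper writes an explicit $\kappa$ in terms of $c_u^f$, $p_\mathrm{max}$, $p_\mathrm{min}$, $q_\mathrm{min}$, $\lambda_H$, $\beta_x-\gamma$, $\beta_z-\gamma$, whereas you argue qualitatively, but the content is identical). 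Your observation that the argument implicitly needs a uniform upper bound $p_\mathrm{max}$ on $P$ is well-taken; the paper's proof also uses $p_\mathrm{max}$ without explicitly stating it as a hypothesis beyond ``uniformly positive definite,'' so you have correctly identified an implicit assumption shared by both proofs rather than a defect unique to yours.
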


\begin{proof}
Denote the minimum eigenvalues of $P$ and $Q$ by $p_\mathrm{min}$ and $q_\mathrm{min}$, and the maximum eigenvalue of $P$ by $p_\mathrm{max}$.
Take $\gamma$ satisfying $0<\gamma<\min\{\beta_x,\beta_z\}$ and select $\kappa$ as follow:
\begin{align*}
    \kappa = \frac{(c_u^f)^2 p_\mathrm{max}^2}{\lambda_H^2 (\beta_x-\gamma) (\beta_z-\gamma) p_\mathrm{min} q_\mathrm{min}} .
\end{align*}
Owing to Lemma \ref{thm:matrix_inequality}, the goal in this proof is to show that the right-hand side of \eqref{matrix_contraction_continuation} with $\gamma$ we took and $P$ and $Q$ in the assumption is negative semidefinite for all $s,t$.

The lower right block on the right-hand side of \eqref{matrix_contraction_continuation} is $H \mathrm{L}^{(\gamma)}_\zeta H$.
The inequality \eqref{design_matrix_inequality_Q} indicates $\mathrm{L}^{(\beta_z)}_\zeta \preceq 0$, and from \eqref{def_Lz} we have $\mathrm{L}^{(\gamma)}_\zeta = \mathrm{L}^{(\beta_z)}_\zeta - (\beta_z-\gamma) Q_\zeta \prec \mathrm{L}^{(\beta_z)}_\zeta \preceq 0$.
Therefore, considering Assumption \ref{ass:regularity}, we have $H \mathrm{L}^{(\gamma)}_\zeta H \prec 0 $.
Denote the Schur complement of $H \mathrm{L}^{(\gamma)}_\zeta H$ in the right-hand side of \eqref{matrix_contraction_continuation} by $S_2$.
Taking into account the fact that
\begin{align*}
    \left( \frac{\partial \zeta}{\partial x} \right)^\top \mathrm{L}^{(\gamma)}_\zeta \frac{\partial \zeta}{\partial x} = \left( \frac{\partial \zeta}{\partial x} \right)^\top \mathrm{L}^{(\gamma)}_\zeta H \left ( H \mathrm{L}^{(\gamma)}_\zeta H \right)^{-1} H \mathrm{L}^{(\gamma)}_\zeta \frac{\partial \zeta}{\partial x}  ,
\end{align*}
we obtain, from simple algebraic calculation,
\begin{align*}
    S_2
    &= \mathrm{L}^{(\gamma)}_x
    - \left \langle \bar{P}_{xU} H^{-1} \frac{\partial \zeta}{\partial x} \right \rangle
    - \kappa^{-1} N , \\
    N
    & \coloneqq \bar{P}_{xU}
    H^{-1}
    \left( \mathrm{L}^{(\gamma)}_\zeta \right)^{-1}
    H^{-1}
    \bar{P}_{xU}^\top .
\end{align*}
From \eqref{design_matrix_inequality}, we have $S_2 \preceq \mathrm{L}^{(\gamma)}_x - \mathrm{L}^{(\beta_x)}_x - \kappa^{-1} N$.
Since \eqref{design_matrix_inequality_Q} implies $\mathrm{L}^{(\gamma)}_\zeta \preceq 0$, we obtain the followings:
\begin{gather*}
    \mathrm{L}^{(\gamma)}_x - \mathrm{L}^{(\beta_x)}_x = - (\beta_x-\gamma)P \preceq - (\beta_x-\gamma) p_\mathrm{min} I_{hm}, 
    \\
    \mathrm{L}^{(\gamma)}_\zeta = \mathrm{L}^{(\beta_z)}_\zeta - (\beta_z-\gamma)Q \preceq - (\beta_z-\gamma) Q \preceq - (\beta_z-\gamma) q_\mathrm{min} I_n .
\end{gather*}
Therefore, under Assumptions \ref{ass:regularity} and \ref{ass:bounded_fu}, we obtain
\begin{align*}
    -\frac{(c_u^f)^2 p_\mathrm{max}^2}{\lambda_H^2 (\beta_z-\gamma)q_\mathrm{min}} I_n \preceq N \prec 0
\end{align*}
and thus $-(\beta_x-\gamma) p_\mathrm{min} I_n \preceq \kappa^{-1} N$
from the choice of $\kappa$.
Therefore, we have $S_2 \preceq 0$.
\end{proof}

The rest is the proof of Theorem \ref{thm:matrix_inequality_schur}.

\begin{proof}
Lemma \ref{prop:matrix_inequality} derives the claim by taking the following relation into account:
\begin{align*}
    \frac{\partial f_\mathrm{r}}{\partial x}(x, u_\mathrm{r}, t)
    = \frac{\partial f}{\partial x}(x,u,t)
    - \frac{\partial f}{\partial u}(x,u,t) \Pi_0 \frac{\partial U^\ast}{\partial x} (x,t).
\end{align*}
\end{proof}

\section{Numerical Verification}

\subsection{Problem Setting}

Consider the plant model \eqref{ssm} with $f$ determined as follows:
\begin{gather*}
  f(x,u,t) = A(t) x + B u + r(x) + w(t), 
  \\
  r(x) \coloneqq 0.2 \times 
  (0, f_\mathrm{act}(x_1), 0, f_\mathrm{act}(x_3))
  \\
  f_\mathrm{act}(a) \coloneqq \log(\exp(a)+1) - \log(2)
  \\
  w(t) = 0.3 \times (0, \mathrm{cs}(t), 0, \mathrm{cs}(t)) .
  \\
  A(t) \coloneqq
  \begin{bmatrix}
    -0.5& 0& 0& 0\\
    1& -1+\frac{\mathrm{cs}(t)}{2}& 0& 0\\
    0& 1& -1+\frac{\mathrm{cs}(t)}{2}& 0\\
    0& 0& 1& -1+\frac{\mathrm{sn}(t)}{2}
  \end{bmatrix} ,
  \\
  \mathrm{cs}(t) = \cos(2t/\pi), \ \mathrm{sn}(t) = \sin(2t/\pi),
  \\
  B \coloneqq
  \begin{bmatrix}
    1 & 0 & 0 & 0\\
    0 & 0 & 1 & 0
  \end{bmatrix} ^\top .
\end{gather*}
Regarding OCP \eqref{ocp} for MPC, the difference scheme is selected as the forward difference with $\varDelta t = 0.5$.
The stage cost $\ell$ and the terminal cost $\Phi$ is selected as follows:
\begin{align*}
  \ell(x^k,u^k,t) = 0.5 \| x^k \|^2_2 + \| u^k \|^2_2 , \
  \Phi(x^h,t) = \| x^h \|^2_2  ,
\end{align*}
where $h=3$.
We select the function $\eta$ in \eqref{virtual} as $\eta_i(z,t) = -0.2z_i -0.05z_i^3, \ i=1,\ldots,6$.

\subsection{Numerical Examination of Assumptions in Corollary \ref{cor:matrix_inequality_reduced}}

\begin{figure}[t!]
  \centering
  \includegraphics[width=1.0\linewidth]{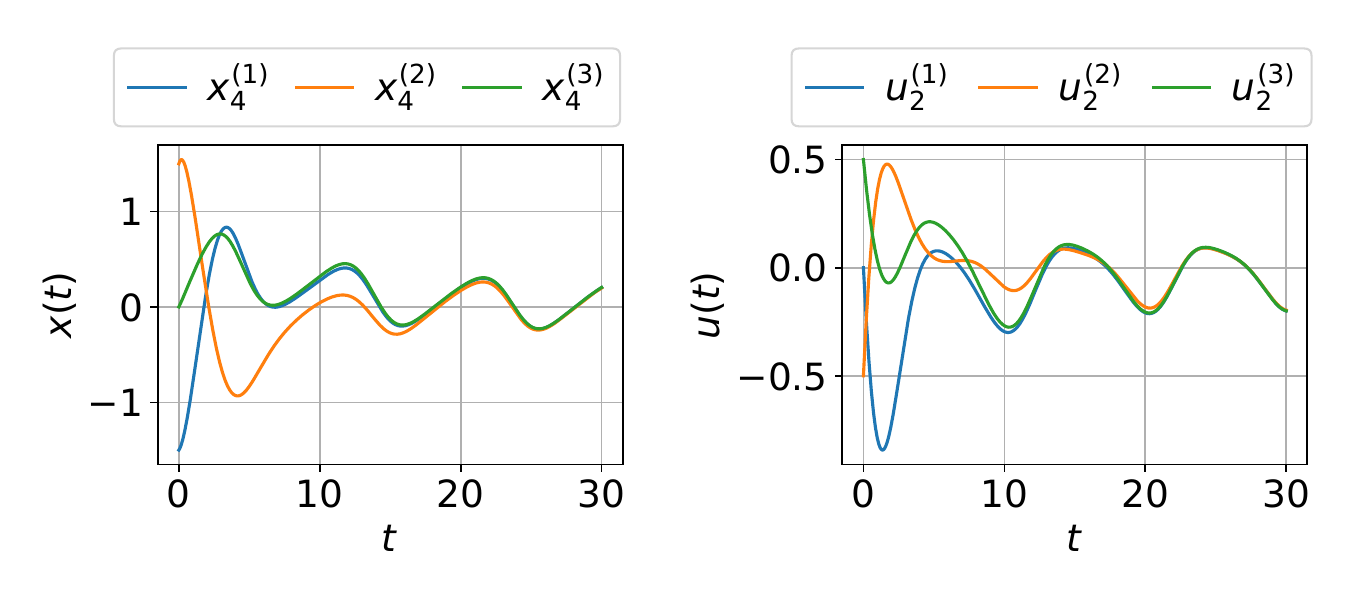}
  \caption{Trajectories of $x_4$ and $u_2$ under three different initial condition.  \label{fig:xu}}
\end{figure}

\begin{figure}[t!]
  \centering
  \includegraphics[width=0.7\linewidth]{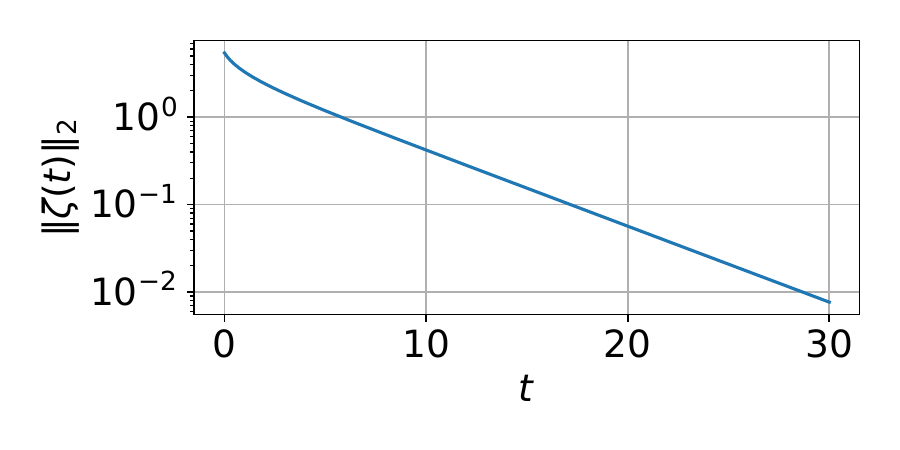}
  \caption{Norm of $\zeta(U^{(1)}(t), x^{(1)}(t), t)$.  \label{fig:fnorm}}
\end{figure}

We check if the assumptions of Corollary \ref{cor:matrix_inequality_reduced} holds.
According to Remark \ref{remark:ass_zetax}, Assumption \ref{ass:bounded_zetax} holds.
We select $P(x,t)=I_4$, $Q(z,t)=I_6$, and $\beta_z=0.4$.
Under this selection, \eqref{design_matrix_inequality_Q} holds.
Moreover, $\eta(0,t)=0$ holds for any $t$, and Assumption \ref{ass:bounded_fu} holds true because $B$ is constant.
Therefore, the remaining assumptions are 
Assumption \ref{ass:regularity},
\eqref{design_matrix_inequality_GK},
and
\eqref{design_matrix_inequality_Pxxopt}.

We examine \eqref{design_matrix_inequality_Pxxopt} by meshing the variables.
Because \eqref{design_matrix_inequality_Pxxopt} gets independent on $u_\mathrm{r}$ in this case, we examine \eqref{design_matrix_inequality_Pxxopt} by meshing only $x$ and $t$.
Consider 21 points on $[-2,2]$ of each element of $x\in \R^4$ at intervals of 0.2, and 40 points on $[0, 3.9]$ of $t\in \R$ at intervals of 0.1 (hence, 7,779,240 points in total).
The maximum eigenvalue of $\mathrm{L}[P, f_\mathrm{r}, x, \beta_x + \beta_\mathrm{p}]$ where $\beta_x = 0.1, \beta_\mathrm{p} = 0.032$ on all points is $-0.105$; thus, the inequality \eqref{design_matrix_inequality_Pxxopt} holds at all checked points.

Next, we examine Assumption \ref{ass:regularity} and \eqref{design_matrix_inequality_GK}.
Consider five points on $[-1.6, 1.6]$ of $x\in \R^4$ at intervals of 0.8, four points on $[0.5, 3.5]$ of $t\in \R$ at intervals of 1, and seven points on $[-0.6, 0.6]$ of each element of $U\in \R^6$ at intervals of 0.2 (hence, 294,122,500 points in total).
The eigenvalue of the Jacobi matrix of $\zeta$ with respect to $U$ on all points is at least $2.11$; thus, Assumption \ref{ass:regularity} holds at all checked points.
In addition, the norm of $\langle P_{xU}K\rangle$ on all points is at most $0.0159$; thus, \eqref{design_matrix_inequality_GK} holds for $\beta_\mathrm{p}=0.032$ at all checked points.

As far as we checked, all of the assumptions in Corollary \ref{cor:matrix_inequality_reduced} hold.
Thus, the path between any two trajectories shrinks, and therefore the trajectories by the continuation method are expected to converge to the optimally controlled trajectory.

\subsection{Results of Control}

We calculate $s(t)$ according to \eqref{dynamics_s} on three different initial conditions.
We denote those three trajectories by $s^{(1)}$, $s^{(2)}$, and $ s^{(3)}$.
We also denote $x$ and $U$ of $ s^{(i)}$ by $x^{(i)}$ and $U^{(i)}$ for $i=1,2,3$.
Each different initial condition is $x^{(1)}(0) = (1.5, 1.5, -1.5, -1.5)$, $x^{(2)}(0) = (-1.5, -1.5, 1.5, 1.5)$, and $x^{(3)}(0) = 0$ for $x$, and $U^{(1)}(0)=0$, $U^{(2)}(0)=(-0.5, -0.5, 0, 0, 0, 0)$, and $U^{(3)}(0)=(0.5, 0.5, 0, 0, 0, 0)$ for $U$.
The trajectories of $x_4$ and $u_2$ are depicted in Fig.~\ref{fig:xu}, which shows that the trajectories in all cases converge to the same trajectory.
The value of $\zeta$ of $s^{(1)}(t)$ is depicted in Fig.~\ref{fig:fnorm}, which indicates $U^{(1)}$ (therefore $U^{(2)}$ and $U^{(3)}$ as well) asymptotically converges to the optimally controlled trajectory.

\subsection{Numerical Verification of Lemma \ref{thm:matrix_inequality}}

Lemma \ref{thm:matrix_inequality} (see also \eqref{V_delta_contraction}) claims $\dot{V}_\delta(t) + \gamma V_\delta(t) = d$, where
\begin{gather*}
    V_\delta(t) \coloneqq \delta s (t)^\top M(s(t),t) \delta s (t),
    \\
    d \coloneqq \delta s^\top \left (
    \begin{bmatrix} \mathrm{L}^{(\gamma)}_x & P G \\ G^\top P & 0 \end{bmatrix}
    + \kappa \left( \frac{\partial \zeta}{\partial s} \right)^\top \mathrm{L}^{(\gamma)}_\zeta \frac{\partial \zeta}{\partial s}
    \right ) \delta s .
\end{gather*}
In this subsection we numerically verify this relation.

First, we select the initial condition as $s(0) = [x^{(1)}(0); U^{(1)}(0)]$.
Second, we generate 100 perturbations from a uniform distribution on a section $[-\varepsilon, \varepsilon]^{n+hm}, \ \varepsilon=10^{-3}$ and add each perturbation to the initial condition.
Then, with each perturbed initial condition, we obtain trajectory $s^\mathrm{p}(t)$ with a step size of $\tau = 10^{-3}$.
We set $\delta s(t) = s^\mathrm{p}(t) - s(t)$ and calculate $V_\delta(t) \coloneqq \delta s (t)^\top M(s(t),t) \delta s (t)$ where $\kappa=1$ and $\gamma=0.1$.
Finally, we calculate $\dot{V}(t)$ using the centered difference and observe error $e(t) = d(t) - \dot{V}(t) - \gamma V(t)$ and error rate $r^e(t) = e(t)/V(t)$, which is expected to be $O(\|\delta s\|) + O(\|\tau\|)$.
The absolute-valued $r^e(t)$ in $0\leq t\leq 5$ among 100 perturbations are observed to be at most $3.93\times 10^{-3}$, which corresponds to the expected order.
This result verifies $\dot{V} + \gamma V = d$, which is the claim of Lemma \ref{thm:matrix_inequality}.

\section{Conclusion}

This letter has analyzed the contraction of the closed-loop system controlled by the continuation method.
Using our proposed metric, we have derived a pair of matrix inequalities that implies the contraction of the closed-loop system.
We also presented a numerical example that verifies our analysis.

\bibliographystyle{IEEEtran}
\bibliography{references}





\end{document}